\newtheorem{thm}{Theorem}[section]
\newtheorem*{thm*}{Theorem}
\newtheorem{lem}[thm]{Lemma}
\newtheorem*{defn*}{Definition}
\newtheorem{prop}[thm]{Proposition}
\newtheorem{rem}[thm]{Remark}
\newtheorem{conj}[thm]{Conjecture}
\theoremstyle{remark}
\newcommand{\Z}{\mathbb{Z}}
\newcommand{\ra}{\rightarrow}
\newcommand{\Map}{\text{Map}}
\renewcommand{\Top}{\text{Top}}
\DeclareMathOperator{\holim}{holim}
\DeclareMathOperator{\hocolim}{hocolim}
\newcommand{\0}{\emptyset}
\newcommand{\D}{\mathscr{D}}
\newcommand{\X}{\mathcal{X}}
\begin{document}

\title{Absolutely homotopy-cartesian squares}


\author{Rosona Eldred}
\address{Department of Mathematics\\
Universit\"{a}t Hamburg\\
Bundesstr. 55\\
D- 20146 Hamburg\\
Germany
}
\curraddr{}
\email{rosona.eldred@math.uni-hamburg.de}
\thanks{}

\subjclass[2000]{Primary 55P65}

\date{}

\begin{abstract}
We call a diagram $\D$ absolutely cartesian if $F(\D)$ is homotopy cartesian for all homotopy functors $F$. This is a sensible notion for diagrams in categories $C$ where Goodwillie's calculus of functors may be set up for functors with domain $C$. 
We prove a classification theorem for absolutely cartesian squares of spaces and state a conjecture of the classification for higher dimensional cubes.
\end{abstract}
\maketitle
\bigskip

Let $I$ be a small indexing category with initial object $\emptyset$ and final object 1. A diagram $\D$ in a category $C$ is a functor $I \ra C$; we restrict ourselves here to $C$ being spaces.
This diagram is cartesian \footnote{In keeping with conventions of Goodwillie's calculus of functors, we only deal with homotopy cartesian diagrams, so omit the ``homotopy'' modifier.} 
when $\D (\emptyset )$ is equivalent to the homotopy limit of $\D$ over $I$ with $\emptyset$ removed, denoted $\holim_{I_\0} \D$ or $\holim_{\emptyset} \D$ when $I$ is clear from context. Similarly, $\D$ is cocartesian if $\D(1)$ is equivalent to the homotopy colimit over $I$ with the final object removed, denoted $\hocolim_{I^1} \D$; as in the cartesian case, the $I$ subscript is omitted if clear from context and we write $\hocolim_1$. A functor $F$ is a homotopy functor if it is weak-equivalence-preserving. We call a diagram $\D$ 
absolutely (co)cartesian if $F(\D)$ is homotopy (co)cartesian for all homotopy functors $F$. Note that a diagram is an $(n+1)$ cube if it is indexed by $I=\mathscr{P}([n])$, the powerset on $[n]=\{0,1,\ldots n\}$.  

\section{Statements of Results and Conjectures}

We prove the following classification theorem for absolutely cartesian squares:

\begin{thm}\label{thm:abscartsq}
A square of spaces 
is absolutely cartesian if and only if it is a map of two absolutely cartesan 1-cubes. That is, of the following form (the other two maps may also be equivalences): 
\[
\xymatrix{
A \ar[r]^{\sim}\ar[d] & B\ar[d]\\
C \ar[r]^{\sim} & D\\
}
\]
\end{thm}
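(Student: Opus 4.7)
My plan is to prove each direction of the biconditional separately.

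\textbf{Sufficient direction} $(\Leftarrow)$. Up to the square's symmetry, suppose $f : A \to B$ and $k : C \to D$ are weak equivalences. For any homotopy functor $F$, both $F(f)$ and $F(k)$ are equivalences. The canonical comparison $F(A) \to F(B) \times^h_{F(D)} F(C)$ is then an equivalence: composing it with the projection to $F(B)$ recovers the equivalence $F(f)$, and this projection is itself an equivalence because it is the base change of the equivalence $F(k)$. Hence $F(\D)$ is cartesian.

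\textbf{Necessary direction} $(\Rightarrow)$. Suppose $\D$ is absolutely cartesian. Since $\mathrm{id}$ is a homotopy functor, $\D$ is cartesian, which forces $\hofib(f) \simeq \hofib(k)$ and $\hofib(g) \simeq \hofib(h)$; in particular $f$ is a weak equivalence iff $k$ is, and likewise $g$ iff $h$. Arguing by contradiction, I suppose none of $f,g,h,k$ are equivalences, so the common fibers $F_H := \hofib(f)$ and $F_V := \hofib(g)$ are both non-contractible. The task is to exhibit a homotopy functor $F$ such that $F(\D)$ is not cartesian. The model case is the loop-space pullback square $\Omega Y \to * \to * \to Y$ with $Y \not\simeq *$: it is cartesian, no edge is an equivalence, and the suspension $\Sigma$ destroys cartesianness because $\Sigma \Omega Y \not\simeq \Omega \Sigma Y$ in general (e.g., by the James--Milnor splitting, or by directly comparing homotopy groups for $Y = S^2$).

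\textbf{Main obstacle.} Extending this destroying-functor argument to a general cartesian square with no parallel equivalences is the crux. My approach is via stabilization: since $\Omega^\infty \Sigma^\infty$ is a homotopy functor from spaces to spaces, absolute cartesianness implies $\Sigma^\infty \D$ is cartesian --- hence cocartesian, since spectra are stable --- which forces the total cofiber of $\D$ to be stably trivial. To derive a contradiction I would use a Ganea/join-of-fibers computation to show that the total cofiber of a cartesian square contains $F_H \ast F_V \simeq \Sigma(F_H \wedge F_V)$ up to higher-order corrections, and this is stably non-trivial whenever both $F_H$ and $F_V$ are non-contractible. The delicate technical step is verifying this join formula cleanly for a general cartesian square (not merely one whose corners are contractible, where the computation reduces directly to the loop-space case), and checking that no stable cancellation between $F_H$ and $F_V$ can occur.
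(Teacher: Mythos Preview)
Your sufficient direction is fine. The necessary direction, however, has a genuine and unfixable gap: the single functor $\Sigma^\infty$ (equivalently $Q=\Omega^\infty\Sigma^\infty$) is simply not strong enough to detect failure of absolute cartesianness, so no amount of refining the Ganea/join computation will close the argument. Concretely, take any pointed non-contractible \emph{acyclic} space $B$ (say the classifying space of a nontrivial acyclic group) and any non-contractible $C$, and form the product square
\[
\xymatrix{
B\times C \ar[r]\ar[d] & B \ar[d]\\
C \ar[r] & \ast.
}
\]
This is cartesian with $F_H\simeq C$ and $F_V\simeq B$, and no edge is an equivalence. Yet $\Sigma^\infty(B\wedge C)\simeq \Sigma^\infty B \wedge \Sigma^\infty C \simeq 0$ since $B$ is acyclic, so the stable total cofiber vanishes and $\Sigma^\infty$ of the square is cocartesian, hence cartesian. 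Your proposed obstruction $\Sigma(F_H\wedge F_V)$ is exactly $\Sigma(B\wedge C)$, which is stably trivial here---this is precisely the ``stable cancellation'' you worried about, and it does occur. So your contradiction never fires, even though (by the theorem) this square is \emph{not} absolutely cartesian.

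The paper's proof avoids this by testing not against $\Sigma^\infty$ alone but against $\Sigma^\infty\Map(D,-)$. The point of inserting $\Map(D,-)$ is not to produce a numerical obstruction but to gain access to the element $\mathrm{id}_D\in\pi_0\Map(D,D)$: cocartesianness of the stabilized square forces $\mathrm{id}_D$ to lift (up to homotopy) through $B$ or $C$, yielding an honest section $D\to B$ or $D\to C$. Iterating this section-production argument on the resulting smaller absolutely cartesian squares eventually traps one of the maps between two one-sided inverses and hence shows it is an equivalence. This representability trick---using the absolutely-cartesian hypothesis for a functor that \emph{depends on the square itself}---is the missing idea in your approach.
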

\medskip

Theorem \ref{thm:abscartsq} is the base case of our following conjecture:

\begin{conj}\label{conj1}
An $n$-cube of spaces 
is absolutely cartesian if and only if it can be written as either a map of two absolutely cartesian $(n-1)$-cubes or a chain of compositions of $n$-cubes of these types. \footnote{There should be some way to express this as the cubes being ``generated by'' those built out of absolutely cartesian squares.}
\end{conj}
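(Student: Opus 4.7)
The plan is to proceed by induction on $n$, with the base case $n=2$ supplied by Theorem \ref{thm:abscartsq} (interpreting the empty chain of compositions as the trivial case). Throughout I would invoke two standard facts about homotopy cartesian cubes: (i) a map $\X \to \Y$ of cartesian $(n-1)$-cubes assembles into a cartesian $n$-cube (see, e.g., Goodwillie's \emph{Calculus II}), and (ii) two cartesian $n$-cubes glued along a shared $(n-1)$-face yield a cartesian $n$-cube (the cubical pasting lemma). Both facts are stable under applying an arbitrary homotopy functor $F$, since such a functor preserves the underlying indexing structure of the cube.

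For the easier (sufficiency) direction, let $\D$ be an $n$-cube of the claimed form. If $\D$ is a map $\X \to \Y$ of absolutely cartesian $(n-1)$-cubes, then for any homotopy functor $F$, both $F\X$ and $F\Y$ are cartesian by hypothesis, so fact (i) gives that $F\D$ is cartesian. By iterating fact (ii), $F$ applied to any chain of compositions of such cubes is again cartesian. Hence every cube of the stated form is absolutely cartesian.

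For the converse, suppose $\D$ is absolutely cartesian. For each axis $i \in [n]$, write $\D$ as a map $\D_{i,0} \to \D_{i,1}$ of $(n-1)$-cubes. I would first attempt to produce an axis $i$ along which both $\D_{i,0}$ and $\D_{i,1}$ are themselves absolutely cartesian; the induction hypothesis then decomposes each, so $\D$ is of the first form. Failing this, the strategy is to exhibit $\D$ as a pasting, along some auxiliary $(n-1)$-face, of smaller $n$-cubes that \emph{do} admit a single-axis decomposition. Concretely, one can insert a homotopy pullback face into $\D$, split $\D$ into two $n$-cubes along that face, and verify by the induction hypothesis applied to each piece that the pieces are maps of absolutely cartesian $(n-1)$-cubes. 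Iterating gives the required chain of compositions.

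The central obstacle is establishing that every absolutely cartesian $\D$ admits such a decomposition at all. Already at $n=2$, Theorem \ref{thm:abscartsq} is proved by a detection argument using specific homotopy functors to force some pair of parallel arrows to be a weak equivalence. Recursing at higher $n$ requires detecting which $(n-1)$-sub-faces are themselves absolutely cartesian---a condition that is itself recursive and therefore delicate. I expect the right approach is to apply a carefully chosen family of probing homotopy functors---plausibly Postnikov truncations $P_k$, the Goodwillie tower $P_n\,\mathrm{Id}$, or functors with prescribed derivatives---to candidate sub-cubes, forcing enough constraints to identify a decomposition axis or a pasting pattern. Because the "chain of compositions" clause is presumably non-vacuous for $n \geq 3$ (unlike $n=2$), the hardest step is not merely the detection but the combinatorial bookkeeping of \emph{which} cubes paste to which along \emph{which} axis. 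Proving that this bookkeeping always terminates in a finite chain is where I expect the real difficulty, and it is why this remains stated as a conjecture rather than a theorem.
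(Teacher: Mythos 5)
Your sufficiency argument is sound and is exactly what the paper claims for the $\Leftarrow$ direction: a map of two cartesian $(n-1)$-cubes is a cartesian $n$-cube, a composition of cartesian $n$-cubes along a shared face is cartesian, and both facts persist after applying any homotopy functor $F$ because $F$ acts objectwise and preserves the shape of the diagram. So every cube of the stated form is absolutely cartesian; the paper asserts this much without detail and it is not in dispute.

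The converse is where the gap lies, and you have in effect conceded it rather than closed it. This statement is a \emph{conjecture} in the paper: the author explicitly writes that it is not yet certain whether the other direction is true, and only the case $n=2$ (Theorem \ref{thm:abscartsq}) is proved. Two concrete problems with your sketch. First, your opening move --- produce an axis $i$ along which both $\D_{i,0}$ and $\D_{i,1}$ are absolutely cartesian --- cannot succeed in general: the paper exhibits a 3-cube (built from maps $A\ra B\ra D\ra B\ra C$ with $B\ra D\ra B$ homotopic to the identity) that is absolutely cartesian and cocartesian but is \emph{not} a map of two absolutely cartesian squares along any axis; it only decomposes as a composition of two such cubes. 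So your ``failing this'' branch is not a fallback but the main case. Second, for that branch you give no mechanism: there is no argument for why an auxiliary homotopy-pullback face exists that splits $\D$ into pieces of the required form, no detection functor identified that would force such a splitting (the $\Sigma^\infty\Map(D,-)$ trick from the square case does not obviously recurse to sub-faces), and no termination argument for the resulting chain. These are precisely the missing ideas that keep the statement a conjecture, so your proposal should be read as a correct proof of one implication plus a research plan for the other, not as a proof of the biconditional.
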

It should be clear that building up an $n$-cube inductively as maps of these absolutely cartesian squares  and compositions of such cubes will yield an absolutely cartesian $n$-cube, which is the $\Leftarrow$ direction of the if and only if. To be clear, two cubes $\mathscr{C,D}$ may be composed if they can be written $\mathscr{C}: X\ra Y$ and $\D:Y \ra Z$; their composition is then $\mathscr{C}\circ \D :X \ra Z$. Geometrically, this looks like ``glueing'' the cubes along their shared face. We give an example in the next section. By chain of compositions, we mean compositions of possibly more than two cubes, e.g.  $\mathscr{C}\circ \D\circ \mathscr{E}$ where $\mathscr{C,D,E}$ are all $n$-cubes built inductively up from maps of absolutely cartesian squares. 

It is not yet certain if the other direction is true. We may observe that the absolutely cartesian squares are also absolutely cocartesian. Thus, we make an additional conjecture that 

\begin{conj}\label{conj2}
An $n$-cube is absolutely cartesian if and only if it is absolutely cocartesian.
\end{conj}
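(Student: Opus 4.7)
The plan is to establish the base case directly and then set up an induction leveraging Conjecture \ref{conj1}.

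For $n=2$, take an absolutely cartesian square as classified in Theorem \ref{thm:abscartsq}, with two parallel horizontal equivalences. For any homotopy functor $F$, the equivalences $F(A) \simeq F(B)$ and $F(C) \simeq F(D)$ survive, and the punctured $\hocolim(F(B) \la F(A) \ra F(C))$ collapses via $F(A)\simeq F(B)$ to $F(C)\simeq F(D)$, showing absolute cocartesianness. For the converse, I would establish a dual to Theorem \ref{thm:abscartsq} classifying absolutely cocartesian squares. The arguments used to prove Theorem \ref{thm:abscartsq} (applying suitable homotopy functors to extract that parallel arrows must be equivalences) can be mirrored for cocartesianness, and should yield the same two-parallel-equivalence form, closing the base case.

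For the inductive step, assume both conjectures hold for cubes of dimension below $n$. Given an absolutely cartesian $n$-cube $\D$, Conjecture \ref{conj1} writes $\D$ as a chain of compositions of $n$-cubes each realized as a map $X \ra Y$ between absolutely cartesian $(n-1)$-cubes in which all $2^{n-1}$ connecting arrows are equivalences. By induction each such $X$ and $Y$ is also absolutely cocartesian. I would then verify directly that a cube whose $2^{n-1}$ connecting edges are equivalences is absolutely cocartesian: for any homotopy functor $F$, the $F$-images of these edges remain equivalences, and collapsing along them identifies the $\hocolim$ of the punctured $n$-cube with the terminal vertex. Stability under chains of compositions then follows from the fact that gluing cubes along a shared face preserves cocartesianness when the shared face is itself a levelwise equivalence of $(n-1)$-cubes.

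The main obstacle is the dependence on Conjecture \ref{conj1}, which is currently established only for $n=2$. An unconditional proof would likely require an intrinsic duality within Goodwillie's calculus — for instance, a symmetry at the level of $n$-excisive approximations exchanging Taylor-tower fibers with cofibers, or a passage to spectra where homotopy cartesian and homotopy cocartesian coincide for $n$-cubes in the stable range. Isolating such a symmetry so as to directly pair absolutely cartesian with absolutely cocartesian cubes, independently of the recursive structure posited by Conjecture \ref{conj1}, is the hardest step; success there would effectively yield Conjecture \ref{conj1} as a byproduct.
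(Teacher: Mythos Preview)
The statement is a \emph{conjecture}; the paper does not prove it, and offers only partial results (Propositions~\ref{prop:rev} and~\ref{prop:tom}). So there is no full proof to match, and your proposal should be read as a strategy rather than a proof. That said, there are genuine gaps beyond the acknowledged dependence on Conjecture~\ref{conj1}.

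Your inductive step misreads Conjecture~\ref{conj1}. When an $n$-cube is written as a map $X\to Y$ of absolutely cartesian $(n-1)$-cubes, the $2^{n-1}$ connecting arrows $X_S\to Y_S$ are \emph{not} asserted to be equivalences. Already for $n=2$ this is visible in Theorem~\ref{thm:abscartsq}: the two $1$-cubes (the horizontal maps) are equivalences, but the vertical connecting maps $A\to C$ and $B\to D$ are arbitrary. Your ``collapse along the equivalences'' argument therefore does not go through. Your base case also has a gap in the converse direction: the paper's remark after Theorem~\ref{thm:abscartsq} notes that the dual classification of absolutely cocartesian squares is only obtained by admitting \emph{contravariant} homotopy functors (using $\Sigma^\infty\Map(-,A)$), which alters the meaning of ``absolutely cocartesian.'' Simply asserting that the argument mirrors is exactly the point in doubt.

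For comparison, the paper's partial progress avoids Conjecture~\ref{conj1} entirely. Proposition~\ref{prop:rev} passes to spectra via $\Sigma^\infty$ (where cartesian and cocartesian coincide) and returns via $\Omega^\infty$ and the $Q$-completion tower, which recovers the space under nilpotence or $1$-connectedness hypotheses. Proposition~\ref{prop:tom} uses iterated mapping-space functors $\Map(-,A)$ to flip variance and show that one implication of the conjecture entails the other. If you want to push further, these stabilization and duality tricks are the tools actually in play; the recursive structure of Conjecture~\ref{conj1} is neither available nor, as your own last paragraph notes, logically prior.
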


If we include contravariant functors, we can show this conjecture for $n=2$, and we will comment on this after the proof for cartesian squares, which is in the following section. 

We will present partial results towards Conjecture \ref{conj2} in section \ref{sec:conj2}; this includes a positive verification of the conjecture when restricting to functors which land in 1-connected spaces (including the identity implies that the spaces in the diagram must originally be 1-connected as well).

The section after that is about a family of 3-cubes which are absolutely cocartesian and cartesian and which are not expressible as a map of two absolute cartesian squares, but as a composition of 3-cubes of that form. We end with applications and related work. 

\section{Proof of Classification for Squares}

\begin{proof}[Proof of Theorem \ref{thm:abscartsq}]\footnote{The current form (and brevity) of this proof is influenced heavily by conversations between the author and Tom Goodwillie about developing a clearer route towards attacking the more general conjecture.}
This relies on switching briefly to the setting of spectra and using this to deduce properties of the original diagram of spaces. We also point out that it suffices to prove that either $B \ra D$ or $C \ra D$ is an equivalence, since equivalences are stable under homotopy pullback. That is, it implies that the mirroring map, $A\ra C$ or $A\ra B$, is also an equivalence.

Consider an absolutely cartesian square of spaces:
\[
\xymatrix{
A \ar[r] \ar[d]& B \ar[d]\\
C \ar[r] & D\\
}
\]

Now apply the functor $\Sigma^{\infty} \Map(D,-)$ to our square: 
 \[
\xymatrix{
\Sigma^{\infty}\Map(D, A) \ar[r] \ar[d]& \Sigma^{\infty}\Map(D,B) \ar[d]\\
\Sigma^{\infty}\Map(D, C) \ar[r] & \Sigma^{\infty}\Map(D,D)\\
}
\]

By assumption, this resultant square is still cartesian. Since the square is in spectra, we know that it is also cocartesian. Recall that $\Sigma^{\infty}$ commutes with colimits. 

We then have the following chain of equivalences:
\medskip
\[
\begin{array}{ccc}
\pi_0 \Sigma^{\infty} \hocolim (\Map(D,B) \leftarrow \Map(D,A) \ra \Map (D,C) &\simeq& \pi_0 \Sigma^{\infty} \Map(D,D).\\
\parallel & & \parallel\\
H_0  (\hocolim (\Map(D,B) \leftarrow \Map(D,A) \ra \Map (D,C) ) & & H_0 (\Sigma^{\infty}\Map(D,D))\\
\parallel & & \parallel\\
\Z [ \pi_0 (\hocolim (\Map(D,B) \leftarrow \Map(D,A) \ra \Map (D,C)) ] & & \Z[\pi_0 \Map(D,D)]\\
\end{array}
\]
\medskip

We can interpret this as telling us that $(\pi_0 \Map (D,B) \cup \pi_0 \Map (D,C))/\sim$ surjects onto $\pi_0 \Map (D,D)$.  Consider $id \in \Map(D,D)$.  This then has a preimage (up to homotopy) in $\Map(D,B)$ and/or $\Map(D,C)$; assume $\Map(D,B)$.  This gives a section $D \ra B$. We can then rewrite our original diagram with our new map in the pre-image of the identity. This is Figure \ref{fig:setup}.

\begin{figure}[h]
\[
\xymatrix{
                    & D \ar[d] \ar@/^/[dd]^{id}\\
A \ar[r] \ar[d]& B \ar[d]\\
C \ar[r]          & D\\
}
\]
\caption{New information included in diagram}
\label{fig:setup}
\end{figure}
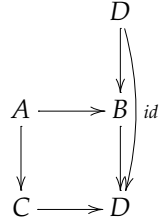

We can add the homotopy pullback of $(A \ra B \leftarrow D)$ to the diagram. Then the whole diagram is a pullback, being a composition of pullback squares. This lets us pull back the identity map, as in Figure \ref{fig:pullbackid}.

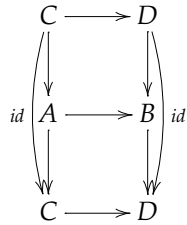
\begin{figure}[h!]
\[
\xymatrix{
 \ar@/_/[dd]_{id}C\ar[r]\ar[d]  & D \ar[d] \ar@/^/[dd]^{id}\\
A \ar[r] \ar[d]& B \ar[d]\\
C \ar[r]          & D\\
}
\]
\caption{Adding the pullback of the top punctured square and pulling back the identity}
\label{fig:pullbackid}
\end{figure}

The whole diagram is itself absolutely cartesian (having two facing maps which are equivalences). Since the bottom and entire squares are both absolutely cartesian, so is the top square, shown again in Figure \ref{fig:top1}.
\begin{figure}[h!]
\[
\xymatrix{
 C\ar[r]\ar[d]  & D \ar[d]\\
A \ar[r] & B\\
}
\]
\caption{``top'' square}
\label{fig:top1}
\end{figure}

Now that the top square is known to be absolutely cartesian, we can proceed in the same as we did with the original square, and obtain a section from $B$ to $D$ or $A$. If the section is to $D$, we are done, as we already have a splitting from $D$ to $B$ and having another the other direction gives us an equivalence between $B$ and $D$. 

Otherwise, we work in the other direction. We add our section $B \ra A$ to our diagram, in Figure \ref{fig:wBsec}, shown without the other equivalences.

\begin{figure}[h]
\[
\xymatrix{
 & C \ar[d]\ar[r] & D\ar[d]\\
 B \ar@/_/[rr] \ar[r] & A\ar[d] \ar[r] & B\ar[d]\\
 & C \ar[r] & D\\
}
\]
\caption{Adding the section $B\ra A$}
\label{fig:wBsec}
\end{figure}
  
Then we pull back the upper left square. The square comprised of the upper left and right squares together is then a cartesian square, with bottom map an equivalence. These are stable under pullback, meaning that the identity map $B\ra B$ is pulled back, this time to the ``top''.  Thus we know the pullback of the left square is equivalent to $D$, so the top two squares are as in Figure \ref{fig:pbBD}. This implies that the left square is also absolutely cartesian, as the entire and the right ones are.

\begin{figure}[h]
 \[
\xymatrix{
 D\ar[r]\ar[d]\ar@/^/[rr]^{id}& C \ar[d]\ar[r] & D\ar[d]\\
 B \ar[r]\ar@/_/[rr]_{id}& A \ar[r] & B\\
}
\]
\caption{Pulled back identity to top of diagram}
\label{fig:pbBD}
\end{figure}


Then we return to $A$, and the (now) absolutely cartesian square in the left of Figure \ref{fig:pbBD}. In the same way as before, we get a section $A \ra C$ or $A\ra B$.  If $A\ra B$ is a section, we are done -- the one-sided inverse (our original section) has an inverse on the other side and $A\overset{\simeq}{\ra} B$, which implies immediately that $D\simeq C$ since it occurs in the cartesian diagram on the left side of Figure \ref{fig:pbBD} .   

If $A \ra C$  instead is a section, we are done. This is because the map $C\ra A$ was a section obtained earlier. We conclude $A\overset{\simeq}{\ra} C$, which implies immediately that $D\simeq B$ since it occurs in the cartesian diagram on the left side of Figure \ref{fig:pbBD} and equivalences are pulled back.

\if false
\begin{figure}[h]
\[
\xymatrix{
C \ar[r]\ar[d] & D\ar[d]\\
A \ar[r]^{\sim} & B\\
}
\]
\caption{Two splittings giving our equivalence}
\label{fig:topcart}
\end{figure}


\begin{figure}[h!]
\[
\xymatrix{
D \ar[r] \ar[d]& C\ar[d]^{\sim}\\
B \ar[r] & A
}
\]
\caption{Upper Left cartesian square}
\label{fig:ulcart}
\end{figure}
\fi


\end{proof}

\begin{rem}
For absolutely \textbf{cocartesian} squares, if we allow our homotopy functors to also be possibly contravariant, then we can establish that they are of the same form as absolutely cartesian squares. The proof is parallel to that for cartesian squares, with the functor $\Sigma^\infty \Map(D,-)$ replaced by $\Sigma^\infty \Map(-, A)$.  	
\end{rem}

\section{Partial results for Conjecture \ref{conj2}}\label{sec:conj2}

As observed by the anonymous reviewer, the following weakened form of the conjecture already holds: 
\begin{prop}\label{prop:rev}
If one restricts to $n$-cubes of 1-connected spaces and homotopy functors which take values in 1-connected-spaces, Conjecture \ref{conj2} holds. The direction (absolutely cocartesian implies absolutely cartesian) holds in the weaker condition of nilpotent\footnote{A space $X$ is nilpotent when $\pi_1(X)$ is a nilpotent group. 1-connected spaces are trivially nilpotent.} spaces and functors taking values in nilpotent spaces.  
\end{prop}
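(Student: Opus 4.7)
The plan is to reduce Conjecture \ref{conj2} to the stable setting, where cubes of spectra (or equivalently, of chain complexes) are cartesian if and only if they are cocartesian, since the total fiber and total cofiber of a spectrum cube differ only by a suspension. The two main ingredients will be: (i) the identification of cartesian-ness and cocartesian-ness for cubes in any stable category; and (ii) the nilpotent Whitehead theorem, which identifies weak equivalences of nilpotent spaces with integral homology equivalences (and is especially clean for $1$-connected spaces).

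To establish absolutely cocartesian $\Rightarrow$ absolutely cartesian under the nilpotent hypothesis, I would argue as follows. Let $\D$ be absolutely cocartesian with respect to functors to nilpotent spaces, and let $F$ be an arbitrary such test functor; the goal is to show $F(\D)$ is cartesian. The key move is to form the auxiliary homotopy functor $\widetilde{F}(X) := \Omega^{\infty}\bigl(H\Z \wedge \Sigma^{\infty} F(X)\bigr)$, whose target (infinite loop spaces of $H\Z$-module spectra) is simple and hence nilpotent, so the absolutely cocartesian hypothesis applies to $\widetilde{F}$ and $\widetilde{F}(\D)$ is cocartesian. After transferring this cocartesian-ness up to the underlying cube of $H\Z$-module spectra, (i) makes that spectrum cube cartesian, and (ii) then descends cartesian-ness back to $F(\D)$ itself.

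For the $1$-connected case, the same circle of ideas applies symmetrically to yield the reverse direction absolutely cartesian $\Rightarrow$ absolutely cocartesian: one feeds $\widetilde{F}$ through the cartesian hypothesis, converts cartesian to cocartesian on spectra via (i), and uses the $1$-connected form of (ii)---tighter than the merely nilpotent form---to transfer cocartesian-ness back to $F(\D)$. The stronger $1$-connected hypothesis is needed for the reverse direction because the comparison between finite homotopy limits in spaces and in the stable category is cleaner when $\pi_{1}$ is trivial.

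The main obstacle I anticipate is justifying the transfer between space-level and stable-category-level (co)cartesian-ness of $\widetilde{F}(\D)$. Since $\Omega^{\infty}$ preserves limits but not colimits, cocartesian-ness of a cube of infinite loop spaces at the space level is not automatically cocartesian-ness as a cube of connective spectra. Bridging this gap will require either feeding additional auxiliary functors through the hypothesis (e.g.\ $\Omega^{m}\widetilde{F}$ for all $m \ge 0$, to stabilize away the unstable ambiguity) or invoking a spectral sequence comparison (Bousfield--Kan / Eilenberg--Moore) that converges thanks to the nilpotent or $1$-connected hypothesis.
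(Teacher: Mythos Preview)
Your overall strategy---pass to spectra, where cartesian equals cocartesian, and return to spaces via a Whitehead-type statement---is the same as the paper's. The implementation, however, diverges, and the gap you flag is not merely a technicality: in the first direction your ``descend'' step does not work, because knowing that $H\Z\wedge\Sigma^\infty F(\X)$ is cartesian in spectra says nothing about the map $F(\X)_\emptyset\to\holim_\emptyset F(\X)$, since neither $\Sigma^\infty$ nor $H\Z\wedge(-)$ commutes with homotopy limits. The nilpotent Whitehead theorem compares two nilpotent spaces via homology, but you have no homology statement about that particular map.

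The paper sidesteps this entirely. For (absolutely cocartesian $\Rightarrow$ absolutely cartesian, nilpotent target) it never tries to lift space-level information to spectrum level. The observation is simply that $Q=\Omega^\infty\Sigma^\infty$ carries cocartesian cubes of \emph{spaces} to cartesian cubes of \emph{spaces}: $\Sigma^\infty$ preserves cocartesianness, in spectra cocartesian $=$ cartesian, and $\Omega^\infty$ preserves cartesianness. Since each $Q^k\circ F$ is again a homotopy functor with nilpotent (infinite-loop) target, absolute cocartesianness makes every $Q^kF(\X)$ cocartesian, and the previous sentence then makes every $Q^{k+1}F(\X)$ cartesian. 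Finally, a nilpotent space is recovered as the homotopy limit of its cosimplicial $Q$-resolution (the Bousfield--Kan $\Z$-completion), so $F(\X)$ is a homotopy limit of cartesian cubes and hence cartesian. This is precisely the ``Bousfield--Kan'' fix you mention in your last paragraph, but used as the argument itself rather than as a patch for a lifting problem that need not arise.

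For (absolutely cartesian $\Rightarrow$ absolutely cocartesian, $1$-connected target) the paper's route is also shorter than yours and uses no $H\Z$: feed the spectrum-valued test functor $\Sigma^\infty F$ directly into the absolute-cartesian hypothesis to get $\Sigma^\infty F(\X)$ cartesian, hence cocartesian, in spectra. Because $\Sigma^\infty$ commutes with homotopy \emph{colimits}, this says exactly that $\hocolim_1 F(\X)\to F(\X)_{[n]}$ is an integral homology equivalence; both sides are $1$-connected, so ordinary Whitehead finishes. Your descent step would in fact go through here for the same reason, but the lift you worry about is unnecessary once one applies the hypothesis to $\Sigma^\infty F$ rather than to $\widetilde{F}$.
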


\begin{proof}[Proof]This is also following the reviewer.\\
\begin{enumerate} \itemsep 5pt
\item \textbf{Functors with nilpotent target, abs cocartesian $\Rightarrow$ abs cartesian}. Let $\X$ be absolutely cocartesian. The functor $\Sigma^\infty$ from Spaces to Spectra preserves the cocartesianness, and in Spectra, diagrams are cocartesian iff cartesian. $\Omega^\infty$ from Spectra to Spaces preserves cartesianness, so $Q\X := \Omega^\infty \Sigma^\infty \X$ is cartesian, in addition to remaining cocartesian. Repeated applications of $Q$ will clearly retain this property; that is, $QQ\cdots Q\X$ will be cartesian and cocartesian. 

As $\Omega^\infty$ and $\Sigma^\infty$ are an adjoint pair and $Q$ the associated monad\footnote{Also referred to as a "triple".}, there is an associated cosimplicial ``$Q$-completion" (a.k.a $\Z$-nilpotent completion) for any space.  For a space $X$, the $Q$-completion of $X$ is the homotopy limit of the cosimplicial space which arises naturally from iterating the monadic maps $X \ra Q(X)$ and $QQX \ra QX$. 
\[
\xymatrix{QX \ar@<3pt>[r]\ar@<-3pt>[r]& \ar[l] QQX \ar[r]\ar@<6pt>[r]\ar@<-6pt>[r]&  \ar@<-3pt>[l]\ar@<3pt>[l]\cdots 
}
\]
The same line of reasoning holds with $\X$ replaced by $F(\X)$ (since $F(\X)$ is also absolutely cocartesian), so $\X$ is absolutely cartesian. 

\item  \textbf{Functors with 1-connected target, absolutely cartesian $\Rightarrow$ absolutely cocartesian}.  Let $\X$ be absolutely cartesian. Then $F(\X)$ and $\Sigma^\infty F(\X)$ for all hofunctors $F: \Top \ra \Top$ are also cartesian; in particular, $\Sigma^\infty F(\X)$ is also cocartesian. Since $F$ takes values in 1-connected spaces, this is sufficient to conclude that $F(\X)$ itself is cocartesian. This is for all hofunctors $F$, so $\X$ is absolutely cocartesian.  
\end{enumerate}
\end{proof}


It was also pointed out by Goodwillie in a discussion with the author that 
\begin{prop}\label{prop:tom}
If (absolutely cocartesian $\Rightarrow$ absolutely cartesian), then (absolutely cartesian $\Rightarrow$ absolutely cocartesian). 
\end{prop}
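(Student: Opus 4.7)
The plan is to exploit the identity of cartesian and cocartesian in spectra, and to use hypothesis (H) to close the descent gap from spectra back to spaces that prevents Proposition~\ref{prop:rev}(2) from holding in full generality.

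Given $\mathscr{X}$ absolutely cartesian and an arbitrary hofunctor $F$, I would first observe as in Proposition~\ref{prop:rev}(2) that $\Sigma^\infty F(\mathscr{X})$ is cartesian in spectra (since $\Sigma^\infty \circ F$ is a hofunctor to spectra, which $\mathscr{X}$ must preserve as cartesian) and hence cocartesian in spectra. The classical obstruction to concluding that $F(\mathscr{X})$ is cocartesian in spaces is that $\Omega^\infty$ preserves, but does not reflect, cocartesianness, which is exactly why Part~2 of Proposition~\ref{prop:rev} had to impose 1-connected targets.

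Next, I would form the Bousfield--Kan $Q$-completion tower $F(\mathscr{X}) \to QF(\mathscr{X}) \to Q^2F(\mathscr{X}) \to \cdots$, each stage of which is absolutely cartesian (by hofunctoriality of $Q$) and componentwise nilpotent (infinite loop spaces have abelian $\pi_1$). Proposition~\ref{prop:rev}(1) already yields the implication absolutely cocartesian $\Rightarrow$ absolutely cartesian on nilpotent-valued cubes; hypothesis (H) upgrades this implication to a universal one. The aim is then to use (H) to show that each $Q^n F(\mathscr{X})$ is not only absolutely cartesian but also absolutely cocartesian, and subsequently to transport this conclusion back along the completion map to $F(\mathscr{X})$ itself.

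The main obstacle --- and the genuine place where (H) must do real work --- is this final transport. I expect the cleanest route to be contrapositive: assuming $F(\mathscr{X})$ fails to be cocartesian for some $F$, one aims to produce from the $Q$-tower an absolutely cocartesian cube that is not absolutely cartesian, contradicting (H). Identifying precisely which derived cube supplies this contradiction is the crux of the argument, and it is here that the universality of (H) over all cubes (not merely nilpotent-valued ones) is used essentially rather than through its nilpotent restriction already recorded in Proposition~\ref{prop:rev}(1).
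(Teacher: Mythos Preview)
Your proposal does not close, and the gap is structural rather than cosmetic. You correctly set up the $Q$-tower and note that each $Q^nF(\mathscr{X})$ is absolutely cartesian, but the step ``use (H) to show each $Q^nF(\mathscr{X})$ is absolutely cocartesian'' is backwards: (H) is the implication \emph{absolutely cocartesian $\Rightarrow$ absolutely cartesian}, so it cannot be applied to an absolutely cartesian cube to produce cocartesianness. You effectively acknowledge this when you defer the ``real work'' to the unspecified contrapositive step, but no candidate cube is ever produced, and I do not see one along this route. There is a second obstruction even if the tower stages were cocartesian: the map $F(\mathscr{X})\to \holim_n Q^nF(\mathscr{X})$ is an equivalence only for nilpotent targets, which is exactly the hypothesis Proposition~\ref{prop:rev} needed and which you are trying to remove; and in any case cocartesianness of the stages does not pass to the homotopy limit since $\hocolim$ and $\holim$ do not commute.

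The paper's argument avoids the stable/completion machinery entirely and is a two-step Yoneda trick with mapping spaces. Since $\Map(-,A)\circ F\circ \Map(-,B)\circ G$ is a covariant hofunctor for any hofunctors $F,G$ and spaces $A,B$, absolute cartesianness of $\mathscr{X}$ gives that $\Map\bigl(F(\Map(G(\mathscr{X}),A)),B\bigr)$ is cartesian for all such data. Varying $B$ and using that a cube $Z$ is cocartesian iff $\Map(Z,B)$ is cartesian for all $B$, one gets that $F(\Map(G(\mathscr{X}),A))$ is cocartesian for all $F$; that is, $\Map(G(\mathscr{X}),A)$ is absolutely cocartesian. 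Now (H) fires in its actual direction: $\Map(G(\mathscr{X}),A)$ is absolutely cartesian, hence cartesian for all $A$, and varying $A$ (Yoneda again) gives that $G(\mathscr{X})$ is cocartesian for every hofunctor $G$. The point is that (H) is applied not to $\mathscr{X}$ or to $Q^nF(\mathscr{X})$ but to the \emph{dualized} cube $\Map(G(\mathscr{X}),A)$, where the direction of the implication is the one you need.
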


\begin{proof}[\textbf{Proof Sketch:}]  Let $\X$ be absolutely cartesian. Then for all $F,G$ hofunctors and $A, B$ in the appropriate categories,  $\Map (F(\Map (G(\X), A)), B)$ is also cartesian. Unwrapping the dependencies and keeping in mind that $\Map( -, Y)$ takes cocartesian to cartesian, we get that $\Map(G(\X),A)$ is absolutely cocartesian.  Apply our hypothesis and that $\Map( -, Y)$ takes cocartesian to cartesian to conclude that $\X$ is also absolutely cocartesian.  
\end{proof} 

\section{An Absolutely Cocartesian and Cartesian 3-cube}
 The original form of Conjecture \ref{conj1} was as follows:
 \begin{quote}
 \textit{An $(n+1)$-cube of spaces $\X$ is absolutely cartesian iff there are absolutely cartesian $n$-cubes $Y, Z$ such that $\X: Y \ra Z$.}
 \end{quote}
\medskip

This was corrected to the current form of the conjecture due to the following illustrative example; a cube which may be expressed as the \textit{composition} of two cubes of the aforementioned type without being a map of two such absolutely (co)cartesian squares.  

Given maps $A\ra B\ra D\ra B\ra C$ with the condition that $B\ra D \ra B$ is equivalent to the identity, the following 3-cube may be assembled: \footnote{We thank the referee for this example, which made it clear that we need to include not just \textit{maps} of $(n-1)$ cubes.}: 
 \[
 \scalebox{.75}{$
 \xymatrix{
                             &A \ar[rr] \ar[dd]& & C\ar@{=}[dd] \\ 
   A \ar@{=}[ur] \ar[rr] \ar[dd]&                  &  B \ar[dd]\ar[ur]\\
                               &B \ar[rr]& & C \\
   D \ar@{=}[rr]\ar[ur]  &                          & D\ar[ur]\\
 }$}
 \]
Now that we have more complicated diagrams, we have chosen to denote equivalences by equality so that it is clear which maps are equivalences. 
 
It is possible to first establish absolute cartesianness and cocartesianness independent of the decomposition, but this is superfluous once we have the decomposition. We will then just provide the decomposition. 
 \if false
We will first establish absolute cartesianness and cocartesianness independent of 
the decomposition, and then give the decomposition.

\subsection{Estabilishing Absolute (co)Cartesianness}
While an $n$-cube is cartesian if and only if its $(n-1)$ cubes of fibers (for all choices of point to fiber over) is cartesian, there is only a partial dual to this statement. If we know an $n$-cube is cocartesian, then it will follow that its $(n-1)$ cofiber cubes will also be cocartesian. \textit{However}, there are cubes with cocartesian cofiber cubes which are not themselves cocartesian. This boils down to the difference between cartesian squares and cocartesian squares; the former induces a long exact sequence in homotopy groups and the latter only in (co)homology groups. 

We are not totally bereft of tools to check for cocartesianness. What is perhaps the most common is the ``Covering Lemma"\cite[Prop 0.2 and its dual]{GC2}, which we will state a restricted version of for cubical diagrams:

\begin{lem}\label{lem:cover}
Let $\D$ be an $n$-cubical diagram (of spaces or spectra). It may be expressed as a map of $(n-1)$ cubes in $n$ ways. For any such expression, $\D: X \ra Y$, $\D$ is cartesian precisely when 
\[
\xymatrix{
X_\emptyset \ar[r]\ar[d] & \holim_\emptyset X\ar[d]\\
Y_\emptyset \ar[r] & \holim_\emptyset Y
}
\]
is cartesian; $X_\emptyset$ denotes the initial object of the diagram $X$ (and likewise for $Y$) . $\D$ is cocartesian precisely when 
\[
\xymatrix{
\hocolim_1 X \ar[r] \ar[d] & X_{[n-1]}\ar[d]\\
\hocolim_1 Y \ar[r] & Y_{[n-1]}\\
}
\]
is cocartesian; here, $X_{[n-1]}$ is the final element of the diagram $X$ (and similarly for $Y$).
\end{lem}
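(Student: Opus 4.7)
Plan. My strategy is to compute $\holim_\emptyset \D$ and $\hocolim_1 \D$ directly in terms of the sub-$(n-1)$-cubes $X$ and $Y$; once those identifications are in hand, each half of the lemma becomes a reformulation of (co)cartesianness of the covering square.

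First I would use the identification $\Po([n]) \cong \Po([n-1]) \times [1]$: choosing one of the $n$ ways to express $\D$ as a map $X \to Y$ of $(n-1)$-cubes corresponds to singling out a $[1]$-factor, so that $X = \D|_{\Po([n-1])\times\{0\}}$ and $Y = \D|_{\Po([n-1])\times\{1\}}$. The punctured indexing poset $\Po([n])\setminus\{\emptyset\}$ is then the union of two subposets: $\Po([n-1])\times\{1\}$ (on which $\D$ restricts to all of $Y$) and $(\Po([n-1])\setminus\{\emptyset\})\times[1]$ (the punctured cube of the map $X \to Y$), meeting in $(\Po([n-1])\setminus\{\emptyset\})\times\{1\}$. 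A dual decomposition handles $\Po([n])\setminus\{[n]\}$.

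Next I would apply the Mayer--Vietoris-style pullback formula for $\holim$ over a union of subposets, together with two elementary facts: $\holim$ over a poset with an initial object is the value at that object, and $\holim$ over the walking arrow of a single map is its source. This yields
\[
\holim_\emptyset \D \;\simeq\; \holim\bigl( Y_\emptyset \to \holim_\emptyset Y \leftarrow \holim_\emptyset X \bigr),
\]
so that comparing with the canonical map out of $X_\emptyset$ shows $\D$ is cartesian iff the covering square is. The dual argument, using the analogous facts (final object, walking arrow's target) for $\hocolim$, gives
\[
\hocolim_1 \D \;\simeq\; \hocolim\bigl( X_{[n-1]} \leftarrow \hocolim_1 X \to \hocolim_1 Y \bigr),
\]
so the canonical map $\hocolim_1 \D \to Y_{[n-1]}$ is identified with the pushout map of the covering square, yielding the cocartesian half.

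The main obstacle is justifying the Mayer--Vietoris identities for $\holim$ and $\hocolim$ over a union of subposets of $\Po([n])$; this is a standard computation in the calculus of functors and appears implicitly in the proof of the original Covering Lemma in Goodwillie's Calculus II, but care is required on the cocartesian side for spaces. In particular one cannot shortcut via \emph{total cofiber contractible $\Leftrightarrow$ cocartesian}, since that equivalence fails outside the simply-connected or spectrum settings; the explicit $\hocolim$ computation really is what has to be run through carefully.
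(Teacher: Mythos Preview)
The paper does not give its own proof of this lemma. It is stated (inside a passage the author ultimately suppressed with \verb|\if false ... \fi|) as a restricted form of Goodwillie's Covering Lemma, with the citation \cite[Prop.~0.2 and its dual]{GC2} standing in for the argument. So there is nothing in the paper to compare your proposal against beyond that pointer.

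That said, your outline is correct and is essentially the standard argument one finds behind Goodwillie's Proposition~0.2. The key point you identify---factoring the punctured poset $\Po([n])\setminus\{\emptyset\}$ through the cospan via the projection that records ``in $Y$ only'', ``punctured in $X$'', or ``punctured in $Y$'', and then computing the homotopy right Kan extension fibrewise---does produce the pullback description $\holim_\emptyset \D \simeq Y_\emptyset \times^h_{\holim_\emptyset Y} \holim_\emptyset X$. Your caution about the cocartesian half in spaces is also well placed: one really must run the dual colimit computation rather than appeal to total cofibers, exactly as you say. If anything, you are supplying more justification than the paper does.
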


This lemma allows us to determine cartesianness (and cocartesianness) by considering it as a map of two squares $X \ra Y$ (possibly in two ways, one for cartesianness and one for cocartesianness). 
\medskip 

For cartesianness, we consider the 3-cube as a map 
\[
\xymatrix{
A \ar[r] \ar[d] & B \ar[d]\\
D \ar[r]^{\simeq} & D\\
}
\hspace{1 cm}
\Rightarrow
\hspace{1 cm}
\xymatrix{
A \ar[r] \ar[d] & C \ar[d]^{\simeq}\\
B \ar[r]& C\\
}.
\]

Let $X$ denote the source square and $Y$ the target square. The 3-cube is cartesian then if the following
\[
\xymatrix{
A=X_\emptyset \ar[r]\ar[d] & (\holim_\emptyset X) \simeq B  \ar[d]\\
A=Y_\emptyset \ar[r] & (\holim_\emptyset Y) \simeq B\\
}
\]
is cartesian. The property that composition the maps $B \ra D \ra B$ is the identity is what gives us that the induced map $B\simeq(\holim_\emptyset X) \ra (\holim_\emptyset Y) \simeq B$ is also equivalent to the identity. It is clear that this is true for any homotopy functor $F$ we apply, except the square will be of $F(A)$ and $F(B)$. 
\medskip 

For cocartesianness, we consider the 3-cube as a map 
\[
\xymatrix{
A \ar[r] \ar[d]^{\simeq} & C \ar[d]\\
A \ar[r]& B\\
}
\hspace{1 cm}
\Rightarrow
\hspace{1 cm}
\xymatrix{
B \ar[r] \ar[d] & C \ar[d]\\
D \ar[r]^{\simeq}& D\\
}.
\]

Let $W$ denote the source square and $Z$ the target square. The 3-cube is cartesian then if the following
\[
\xymatrix{
(\hocolim_1 W) \simeq B  \ar[d]\ar[r] & W_{[1]}=C\ar[d]\\
(\hocolim_1 Z) \simeq B \ar[r]& Z_{[1]}=C\\
}
\]
is cocartesian. In this case, the maps $B \ra D \ra B$ compsing to the identity is what gives us that the map $B\simeq (\hocolim_1 W)\ra(\hocolim_1 Z) \simeq B$ is an equivalence. It is again clear that this is true for any homotopy functor $F$ we apply, except the square will be of $F(B)$ and $F(C)$. 

This 3-cube is not written as a map of two absolutely (co)cartesian squares, but it has just been show to be absolutely cocartesian and cartesian. 
\fi 

\subsection{Factorization}
Despite not being a map of two absolutely (co)cartesian squares, the 3-cube \footnote{This factorization related to one pointed out by Tom Goodwillie.} may be expressed as a composition of two 3-cubes which \textit{are} of that form. This relies on the ability to express $B$ as a retract of $D$. We compose the cube 
\[ \scalebox{.75}{$
\xymatrix{
                             &A \ar[rr] \ar[dd]& & C\ar@{=}[dd] \\ 
   A \ar@{=}[ur] \ar[rr] \ar[dd]&                  &  B \ar@{=}[dd]\ar[ur]\\
                                &B \ar[rr]& & C\\
   B \ar@{=}[rr]\ar@{=}[ur]  &                          & B\ar[ur]\\  
   %
 }$}
\]
with 
\[ \scalebox{.75}{$
\xymatrix{
   %
                                &B \ar[rr]\ar@{=}[dd]& & C\ar@{=}[dd] \\
   B \ar@{=}[rr]\ar@{=}[ur]\ar[dd]  &                          & B\ar[dd]\ar[ur]\\  
                               &B \ar[rr]& & C \\
   D \ar@{=}[rr]\ar[ur]  &                          & D\ar[ur]\\
 }$}
\]
to get our original 3 cube as the total cube of the composition (`glueing' the first cube atop the second):

\[ \scalebox{.75}{$
\xymatrix{
                             &A \ar[rr] \ar[dd]& & C\ar@{=}[dd] \\ 
   A \ar@{=}[ur] \ar[rr] \ar[dd]&                  &  B \ar@{=}[dd]\ar[ur]\\
                                &B \ar[rr]\ar@{=}[dd]& & C\ar@{=}[dd] \\
   B \ar@{=}[rr]\ar@{=}[ur]\ar[dd]  &                          & B\ar[dd]\ar[ur]\\  
                               &B \ar[rr]& & C \\
   D \ar@{=}[rr]\ar[ur]  &                          & D\ar[ur]\\
 }$}
\]

\if false
 and if the following is cartesian
\[
\xymatrix{

}
\]
then the 3-cube is cartesian. If the following is cocartesian, 
\[

\]
the 3-cube is cocartesian.
\fi

\section{Applications and Related Work}
We end with a few remarks on extending this and other approaches. 
Par\'{e}\cite{pare} studies strict colimits which are preserved by all functors, and calls such colimits absolute, a naming convention which we have chosen to follow by calling our \textit{homotopy} diagrams absolute when preserved by all homotopy functors.  Street works in an enriched setting and states his results in terms of distributors\cite{street}. It is not clear at the moment how applicable their results are in this setting. The first step would be to switch to considering simplicial functors, which are (roughly) as good as homotopy functors, to work enriched. 

The original goal to classifying absolutely cartesian cubes was to get ``wrong way'' maps, from holims of cubes of one dimension to ones of a higher dimension, in a certain diagram related to the $E_1$ page of the spectral sequence associated to a cosimplicial space. These are going the wrong way inasmuch as natural maps between diagrams are usually from lower to higher dimension, which induces a map from the holim of the higher dimensional diagram to the holim of the lower dimensional diagram.

 A map of cubes  \textit{of the same dimension}, $A \ra B$, induces maps on the homotopy limits of the cubes $\holim A \ra \holim B$ (also for the punctured homotopy limits, $\holim_\emptyset$). If A and B are diagrams, with B cartesian, $\holim_0 (A \ra B) \simeq \holim_\emptyset A$.  That is, a way to take an $n$ cube and produce an $(n+1)$ cube with equivalent (punctured) homotopy limit is to find a cartesian $n$ cube to which it maps naturally. We would also like do these constructions only once for all homotopy functors, so the cube we are mapping to not only needs to be cartesian, but with cartesianness preserved by all homotopy functors.   

\bibliographystyle{amsplain}
\providecommand{\bysame}{\leavevmode\hbox to3em{\hrulefill}\thinspace}
\providecommand{\MR}{\relax\ifhmode\unskip\space\fi MR }
\providecommand{\MRhref}[2]{%
  \href{http://www.ams.org/mathscinet-getitem?mr=#1}{#2}
}
\providecommand{\href}[2]{#2}

\end{document}